\theoremstyle{plain}
\newtheorem{theorem}{Theorem}
\newtheorem{lemma}{Lemma}
\newtheorem{proposition}{Proposition}
\theoremstyle{remark}
\newtheorem{remark}{Remark}
\def\tg{\mathrm{tg}}
\def\R{\mathbb{R}}
\def\H{\mathbb{H}}
\newcommand{\nks}{\ensuremath{\mathbb{S}^3 \times
\mathbb{S}^3}}
\title[Geodesic lines on nearly  K\"ahler $\nks$]
{Geodesic lines on nearly  K\"ahler $\nks$}
\author[Djori\'c]{Milo\v s Djori\'c}
\address{University of Belgrade, Faculty of Mathematics, Studentski trg 16, pb. 550, 11000 Belgrade,
Serbia} \email{milosdj@matf.bg.ac.rs}
\author[Djori\'c]{Mirjana  Djori\'c}
\address{University of Belgrade, Faculty of Mathematics, Studentski trg 16, pb. 550, 11000 Belgrade,
Serbia} \email{mdjoric@matf.bg.ac.rs}
\author[Moruz]{Marilena Moruz}
\address{KU Leuven, Department of Mathematics, Celestijnenlaan 200B, 3001 Leuven, Belgium}
\email{marilena.moruz@kuleuven.be}
\thanks{The first two authors are  partially
supported by Ministry of Education, Science and Technological Development, Republic of Serbia, project 174012.}
\thanks{The third author is a postdoctoral fellow of the Research Foundation Flanders (FWO)}
\numberwithin{equation}{section}
\numberwithin{lemma}{section}\numberwithin{example}{section}\numberwithin{theorem}{section}\numberwithin{proposition}{section}
\begin{document}

\keywords{nearly K\"ahler $\mathbb{S}^3\times\mathbb{S}^3$, geodesic lines.}

\subjclass[2010]{53C15, 53C22}

\begin{abstract}
A Nearly  K\"ahler manifold is an almost Hermitian manifold with the weakened  K\"ahler condition, that is, instead of being zero, the covariant derivative of the almost complex structure is skew-symmetric.	 We give the explicit parametrization of geodesic lines on nearly  K\"ahler $\nks$.
\end{abstract}

\maketitle

\section{Introduction}

A nearly  K\"ahler manifold  is an almost Hermitian manifold  $(M,g,J)$ with the property that the tensor field $\nabla J$ is skew-symmetric:	
$(\nabla_X J) Y + (\nabla_Y J) X  = 0,$ for all $\, X,Y \in TM,$ where $\nabla$ is the Levi-Civita connection of the metric $g$.
The first example of such manifolds was introduced on $\mathbb{S}^6$ by Fukami and Ishihara in \cite{fukami}   and, later, these manifolds have been intensively studied by  A. Gray in \cite{gray}, who generalized the classical holonomy concept by
introducing a classification principle for non-integrable special Riemannian geometries
and studied the defining differential equations of each class.
The structure theorem of Nagy \cite{nagy, nagy2} asserts that a strict and complete nearly K\"ahler manifold (of arbitrary dimension) writes as a Riemannian product of homogeneous nearly K\"ahler spaces, twistor spaces over quaternionic K\"ahler manifolds and $6$-dimensional nearly K\"ahler manifolds.
Moreover, Butruille  has  shown in \cite{but} that the only nearly K\"ahler homogeneous manifolds of dimension $6$ are the compact spaces $\mathbb{S}^6$, $\nks$, $\mathbb{C}P^3$ and the flag manifold  $SU(3)/U(1)\times U(1)$ (where the last three are not endowed with the standard metric).
Furthermore, Foscolo et Haskins  found in \cite{Haskins} the first two complete non-homogeneous nearly K\"ahler structures  on $\mathbb{S}^6$ and on $\nks$. This way they addressed an important problem in the field, namely the absence of any complete non-homogeneous examples.\\

More recently interest in nearly K\"ahler manifolds increased because these manifolds are examples of geometries with torsion and therefore they have applications in mathematical physics \cite{Agricola}.
Moreover, $6$-dimensional nearly K\"ahler manifolds are Einstein and are related to the existence of a Killing spinor \cite{Fridrich}, which inspires their further investigation.
In this paper we continue the research of the nearly K\"ahler manifold $\nks$ (see the previous work in \cite{Burcu}, \cite{dioosvranckenwang}, \cite{ZH}, for instance) by studying its geodesics.
In Section 2 we recall the basic properties of the nearly K\"ahler $\nks$ and in Section 3 our main results are stated and proved. Theorem 3.1. presents the parametrization of geodesic lines on $\nks$ and Proposition 3.1. presents their features.

\section{Preliminaries}\label{prelim}

In this section we recall  the homogeneous nearly K\"ahler structure of  $\mathbb{S}^3\times\mathbb{S}^3$. For more details we refer the reader to \cite{complexsurfaces}. By the natural identification $T_{(p,q)}(\mathbb{S}^3\times\mathbb{S}^3)\cong T_p\mathbb{S}^3\oplus T_q\mathbb{S}^3$, we write a tangent vector at $(p,q)$ in $\mathbb{S}^3\times\mathbb{S}^3$ as $Z(p,q)=(U(p,q),V(p,q))$ or simply $Z=(U,V)$. Regarding the $3$-sphere in $\mathbb{R}^4$ as the set of all unit quaternions in $\mathbb{H}$ and using the notations $i,j,k$ to denote the imaginary units of $\mathbb{H}$,
the vector fields defined by
\begin{equation} \label{baza}
\begin{array}{ll}
\tilde E_1(p,q)=(pi,0),&\tilde  F_1(p,q)=(0,qi),\\
\tilde E_2(p,q)=(pj,0),& \tilde  F_2(p,q)=(0,qj),\\
\tilde E_3(p,q)=-(pk,0),&\tilde   F_3(p,q)=-(0,qk),
 \end{array} \end{equation}
are mutually orthogonal with respect to the usual Euclidean product metric on  $\mathbb{S}^3\times\mathbb{S}^3$. The Lie brackets are $[\tilde E_i,\tilde E_j]=-2\varepsilon_{ijk}\tilde E_k,$ $[\tilde F_i,\tilde F_j]=-2\varepsilon_{ijk}\tilde F_k$ and $[\tilde E_i,\tilde F_j]=0$, where
\begin{align*}
\varepsilon_{ijk}=\left\{
\begin{array}{l}
1, \quad \text{if} \ (ijk)\ \text{is an even permutation of } (123),\\
-1, \quad \text{if} \ (ijk)\ \text{is an odd permutation of } (123),\\
0, \quad \text{otherwise.}
\end{array}
\right.
\end{align*}
The almost complex structure $J$ on the nearly K\"ahler $\mathbb{S}^3\times\mathbb{S}^3$ is defined by
\begin{equation}\label{defJ}
J(U,V)_{(p,q)}=\frac{1}{\sqrt{3}} (2pq^{-1}V-U, -2qp^{-1}U+V )_{(p,q)},
\end{equation}
for $(U,V)\in T_{(p,q)}(\mathbb{S}^3\times\mathbb{S}^3)$
and the Hermitian metric associated with the usual Euclidean product metric on $\mathbb{S}^3\times\mathbb{S}^3$ on $\mathbb{S}^3\times\mathbb{S}^3$ is given by
\begin{align}\label{g}
g(Z,Z')=&\frac{1}{2}(\langle Z,Z' \rangle+ \langle JZ,JZ' \rangle)\\
	=&\frac{4}{3}(\langle U,U' \rangle+\langle V,V'\rangle)-\frac{2}{3}(\langle p^{-1}U,q^{-1}V' \rangle+\langle p^{-1}U',q^{-1}V \rangle),\nonumber
\end{align}
where $Z = (U, V )$, $Z' = (U', V')$, in the first line $\langle \cdot,\cdot \rangle $ stands for the usual Euclidean product metric on $\mathbb{S}^3\times\mathbb{S}^3$, while in the second line $\langle \cdot,\cdot \rangle $ stands for the usual Euclidean metric on $\mathbb{S}^3$. By definition, the almost complex structure $J$ is compatible with the metric $g$.\\
Using the Koszul formula (see Lemma 2.1. in \cite{complexsurfaces} for more details), one finds
that the Levi-Civita connection $\tilde\nabla$ on $\mathbb{S}^3\times \mathbb{S}^3$ with respect to the metric $g$ is given by
\[ \begin{array}{ll}
\tilde\nabla_{\tilde E_i}\tilde E_j=-\varepsilon_{ijk}\tilde E_k & \tilde\nabla_{\tilde E_i}\tilde F_j=\frac{\varepsilon_{ijk}}{3}(\tilde E_k-\tilde F_k) \\
\tilde\nabla_{\tilde F_i}\tilde E_j=\frac{\varepsilon_{ijk}}{3}(\tilde F_k -\tilde E_k)& \tilde\nabla_{\tilde F_i}\tilde F_j=-\varepsilon_{ijk}\tilde F_k.
 \end{array} \]
\noindent
Consequently, computing
\begin{equation}\label{nablaJ}
 \begin{array}{ll}
(\tilde\nabla_{\tilde E_i}J)\tilde E_j=-\frac{2}{3\sqrt{3}}\varepsilon_{ijk}(\tilde E_k+2\tilde F_k), &    (\tilde\nabla_{\tilde E_i}J)\tilde F_j=-\frac{2}{3\sqrt{3}}\varepsilon_{ijk} (\tilde E_k-\tilde F_k),\\
(\tilde\nabla_{\tilde F_i}J)\tilde  E_j=-\frac{2}{3\sqrt{3}} \varepsilon_{ijk} (\tilde E_k-\tilde F_k), & (\tilde\nabla_{\tilde F_i}J)\tilde F_j=-\frac{2}{3\sqrt{3}}\varepsilon_{ijk}(2\tilde E_k+\tilde F_k),
 \end{array}
\end{equation}
we conclude that the $(1, 2)$-tensor field $G=\nabla J$ is skew-symmetric and therefore $\mathbb{S}^3\times\mathbb{S}^3$, equipped with $g$ and $J$, becomes a nearly K\"ahler manifold.
Moreover $G$ satisfies
\begin{equation}
G(X,JY)=-JG(X,Y), \quad g(G(X,Y),Z)+g(G(X,Z),Y)=0,
\end{equation}
for any vector fields $X,Y,Z$ tangent to $\mathbb{S}^3\times\mathbb{S}^3$.

The almost product structure $P$ introduced in \cite{complexsurfaces} is defined as
\begin{equation}\label{defP}
PZ=(pq^{-1}V,qp^{-1}U), \quad Z=(U,V)\in T_{(p,q)}(\mathbb{S}^3\times\mathbb{S}^3)
\end{equation}
and it has the following properties:
\begin{align*}
&P^2=Id \quad \text{($P$ is involutive)},\\
&PJ=-JP \quad \text{($P$ and $J$ anti-commute)},\\
&g(PZ,PZ')=g(Z,Z') \quad \text{($P$ is compatible with $g$)},\\
&g(PZ,Z')=g(Z,PZ') \quad \text{($P$ is symmetric)}.
\end{align*}
Moreover, the almost product structure $P$ can be expressed in terms of the usual product structure $QZ=Q(U,V)=(-U,V)$ and vice versa:
\begin{equation*}
QZ = \frac{1}{\sqrt{3}} (2 PJZ - JZ),\quad
PZ = \frac{1}{2}(Z-\sqrt{3}QJZ).
\end{equation*}
Next, we recall the relation between the Levi-Civita connections $\tilde\nabla$ of $g$ and $\nabla^E$ of the Euclidean product metric $\langle\cdot,\cdot\rangle$.
\begin{lemma}\label{nablaE}\cite{dioosvranckenwang}
The relation between the nearly K\"ahler connection $\tilde\nabla$ and the Euclidean connection $\nabla^E$ is
\begin{equation}\label{relnablas}
\nabla^E_XY=\tilde\nabla_XY+\frac{1}{2}(JG(X,PY)+JG(Y,PX)).
\end{equation}
\end{lemma}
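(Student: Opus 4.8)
The plan is to read \eqref{relnablas} as an identity between $(1,2)$-tensor fields and to verify it on the global left-invariant frame $\{\tilde E_i,\tilde F_i\}$ of \eqref{baza}. First I would note that the difference $D(X,Y):=\nabla^E_XY-\tilde\nabla_XY$ is tensorial and symmetric, because $\nabla^E$ and $\tilde\nabla$ are both torsion-free, while the right-hand side $S(X,Y):=\tfrac12\bigl(JG(X,PY)+JG(Y,PX)\bigr)$ is visibly tensorial and symmetric as well. Hence it suffices to check $D=S$ on all pairs from the frame, and by the symmetry of both sides together with the antisymmetry of $\varepsilon_{ijk}$ only a few cases are genuinely distinct.

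Next I would make the three ingredients explicit on the frame. The Euclidean product metric restricts on each factor to the round (bi-invariant) metric of $\S^3\cong SU(2)$, for which $\nabla^E_XY=\tfrac12[X,Y]$ on left-invariant fields; together with $[\tilde E_i,\tilde F_j]=0$ this gives $\nabla^E_{\tilde E_i}\tilde E_j=-\varepsilon_{ijk}\tilde E_k$, $\nabla^E_{\tilde F_i}\tilde F_j=-\varepsilon_{ijk}\tilde F_k$ and $\nabla^E_{\tilde E_i}\tilde F_j=\nabla^E_{\tilde F_i}\tilde E_j=0$. Subtracting the connection table for $\tilde\nabla$, the difference $D$ vanishes on pairs of the same type and equals $\mp\tfrac{\varepsilon_{ijk}}{3}(\tilde E_k-\tilde F_k)$ on the two mixed pairs. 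For the right-hand side, \eqref{defP} yields $P\tilde E_i=\tilde F_i$ and $P\tilde F_i=\tilde E_i$, while \eqref{defJ} gives $J\tilde E_i=\tfrac1{\sqrt3}(-\tilde E_i-2\tilde F_i)$ and $J\tilde F_i=\tfrac1{\sqrt3}(2\tilde E_i+\tilde F_i)$, and $G$ is read directly from \eqref{nablaJ}.

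Then I would substitute these into $S$ and simplify case by case, using the elementary identities $J(\tilde E_k+2\tilde F_k)=\sqrt3\,\tilde E_k$, $J(2\tilde E_k+\tilde F_k)=-\sqrt3\,\tilde F_k$ and $J(\tilde E_k-\tilde F_k)=-\sqrt3(\tilde E_k+\tilde F_k)$. On a pair of the same type, say $(\tilde E_i,\tilde E_j)$, the two summands $JG(\tilde E_i,\tilde F_j)$ and $JG(\tilde E_j,\tilde F_i)$ are opposite by the antisymmetry $\varepsilon_{jik}=-\varepsilon_{ijk}$, so $S=0=D$. On a mixed pair, say $(\tilde E_i,\tilde F_j)$, the two summands $JG(\tilde E_i,\tilde E_j)$ and $JG(\tilde F_j,\tilde F_i)$ land in $\mathrm{span}(\tilde E_k)$ and $\mathrm{span}(\tilde F_k)$ respectively and, after halving, combine to $-\tfrac{\varepsilon_{ijk}}{3}(\tilde E_k-\tilde F_k)$, which is exactly $D(\tilde E_i,\tilde F_j)$; the pair $(\tilde F_i,\tilde E_j)$ is handled symmetrically. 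Matching every case completes the proof.

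I expect the only real difficulty to be the bookkeeping: keeping the signs straight in $\nabla^E$, in $J$ applied to the outputs of $G$, and in the $\varepsilon$-contractions, since the nontrivial matching on the mixed pairs hinges on a precise cancellation. A more conceptual alternative is available and worth mentioning: from \eqref{g} one obtains $\langle Z,Z'\rangle=g(Z,Z')+\tfrac12\,g(PZ,Z')$, so by uniqueness of the Levi-Civita connection it is enough to show that $\tilde\nabla+S$ is torsion-free, which is immediate from the symmetry of $S$, and compatible with $\langle\cdot,\cdot\rangle$. The latter reduces to an algebraic identity relating $\tilde\nabla P$, $G$, $J$ and $P$; the obstacle in that route is that one must first compute $\tilde\nabla P$, which is why the direct frame verification is the more self-contained choice.
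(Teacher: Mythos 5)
The paper itself offers no proof of this lemma: it is imported verbatim from \cite{dioosvranckenwang}, so your frame verification is not competing with an argument in the text, and the overall strategy is sound. The difference of two torsion-free connections is a symmetric $(1,2)$-tensor, the right-hand side of \eqref{relnablas} is visibly a symmetric tensor, and it therefore suffices to compare them on the left-invariant frame \eqref{baza}. Your tables for $\nabla^E$, for $P$ and $J$ on the frame, the three identities $J(\tilde E_k+2\tilde F_k)=\sqrt3\,\tilde E_k$, $J(2\tilde E_k+\tilde F_k)=-\sqrt3\,\tilde F_k$, $J(\tilde E_k-\tilde F_k)=-\sqrt3(\tilde E_k+\tilde F_k)$, and the target value $D(\tilde E_i,\tilde F_j)=-\tfrac{\varepsilon_{ijk}}{3}(\tilde E_k-\tilde F_k)$ are all correct, as is the observation that the same-type pairs give $0=0$ by antisymmetry of $\varepsilon_{ijk}$.

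The gap is in the one step you yourself flag as delicate. If you literally take $G$ from \eqref{nablaJ} as printed, the mixed-pair computation does not close: with $G(\tilde E_i,\tilde E_j)=-\tfrac{2}{3\sqrt3}\varepsilon_{ijk}(\tilde E_k+2\tilde F_k)$ and $G(\tilde F_j,\tilde F_i)=-\tfrac{2}{3\sqrt3}\varepsilon_{jik}(2\tilde E_k+\tilde F_k)$ one finds $\tfrac12\bigl(JG(\tilde E_i,\tilde E_j)+JG(\tilde F_j,\tilde F_i)\bigr)=-\tfrac{\varepsilon_{ijk}}{3}(\tilde E_k+\tilde F_k)$, which disagrees with $D(\tilde E_i,\tilde F_j)$ in the sign of the $\tilde F_k$-component. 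The culprit is that the last entry of \eqref{nablaJ} carries a sign error: recomputing $(\tilde\nabla_{\tilde F_i}J)\tilde F_j=\tilde\nabla_{\tilde F_i}(J\tilde F_j)-J\tilde\nabla_{\tilde F_i}\tilde F_j$ from the connection table gives $+\tfrac{2}{3\sqrt3}\varepsilon_{ijk}(2\tilde E_k+\tilde F_k)$, in agreement with \eqref{formulaG} (test $X=\tilde F_1$, $Y=\tilde F_2$ there). With this corrected value the mixed pair yields $\tfrac12\bigl(-\tfrac23\varepsilon_{ijk}\tilde E_k+\tfrac23\varepsilon_{ijk}\tilde F_k\bigr)=-\tfrac{\varepsilon_{ijk}}{3}(\tilde E_k-\tilde F_k)$, exactly $D(\tilde E_i,\tilde F_j)$, and the lemma is verified. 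So your stated conclusion is right, but as written it does not follow from the input you cite; you must recompute (or correct) the $(\tilde F_i,\tilde F_j)$ entry of $G$ rather than read it off. Your alternative route via $\langle Z,Z'\rangle=g(Z,Z')+\tfrac12 g(PZ,Z')$ and uniqueness of the Levi-Civita connection is also viable (that identity is correct on the frame), but, as you note, it requires computing $\tilde\nabla P$ first, so the frame check remains the shorter self-contained option.
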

We also recall here a useful formula from (\cite{dioosvranckenwang}), decomposing $D_XY$ along the tangent and the normal directions:
\begin{equation}\label{Euclconn}
D_XY=\nabla^E_XY+\frac{1}{2}\langle D_XY,(p,q) \rangle (p,q)+\frac{1}{2}\langle D_XY,(-p,q) \rangle (-p,q),
\end{equation}
where $D$ is the Euclidean connection on $\R^8$ and $X,Y$ are tangent vector fields on $\nks$.


\section{Geodesic lines on \nks }\label{geod lines}

In order to study and classify certain types of submanifolds of \nks, it is interesting and useful to know how its geodesic lines look like.
Since for unitary quaternions $a, b$ and $c$, the map $f:\nks\rightarrow \nks$ given by
$(p, q)\mapsto (apc^{-1}, bqc^{-1})$ preserves  the usual metric $\langle \cdot ,\cdot \rangle $ and the almost complex structure $J$, it is an isometry of $(\nks, g)$ (cf. \cite{dioosvranckenwang} and remark after Lemma 2.2 in \cite{PodestaSpiro}).
Therefore, it is enough to obtain the parametrization of geodesic lines through the point $(1,1)$, as the geodesic lines through the point $(a,b)$ are given by $\tilde\gamma(t)=(ax(t),by(t))$. We prove the following.

\begin{theorem}\label{Geodlinesth}
The geodesic lines on \nks through the point $(1,1)$ have the following parametrization:
\begin{enumerate}
\item $\gamma(t)=(\cos(at)+\sin(at)i,\cos(at)-\sin(at)i)$, $a\in \R\setminus\{0\}$;
\item $\gamma(t)=(\cos(at)+\sin(at)i,\cos(\widetilde{a}t)+\sin(\widetilde{a}t)i),$ where $c_1\in Im\H\setminus\{0\}$, $d_1\in\mathbb{R}$, $a=\frac{1+d_1}{2}\| c_1\|$, $\widetilde{a}=\frac{1-d_1}{2}\| c_1\|$;
\item \begin{eqnarray*}\label{eq15}
\gamma(t)=\bigg(\left(\frac{1}{1+\varphi^2}\cos(At)+\frac{\varphi^2}{1+\varphi^2}\cos(Bt)\right)+
\left(\frac{1}{1+\varphi^2}\sin(At)+\frac{\varphi^2}{1+\varphi^2}\sin(Bt)\right)i+\\ \nonumber
\left(\frac{\varphi}{1+\varphi^2}\sin(At)-\frac{\varphi}{1+\varphi^2}\sin(Bt)\right)j-
\left(-\frac{\varphi}{1+\varphi^2}\cos(At)+\frac{\varphi}{1+\varphi^2}\cos(Bt)\right)k\,,\\ \nonumber
\left(\frac{1}{1+\widetilde{\varphi}^2}\cos(\widetilde{A}t)+\frac{\widetilde{\varphi}^2}{1+\widetilde{\varphi}^2}\cos(\widetilde{B}t)\right)+
\left(\frac{1}{1+\widetilde{\varphi}^2}\sin(\widetilde{A}t)+
\frac{\widetilde{\varphi}^2}{1+\widetilde{\varphi}^2}\sin(\widetilde{B}t)\right)i+\\ \nonumber
\left(\frac{\widetilde{\varphi}}{1+\widetilde{\varphi}^2}\sin(\widetilde{A}t)-
\frac{\widetilde{\varphi}}{1+\widetilde{\varphi}^2}\sin(\widetilde{B}t)\right)j-
\left(-\frac{\widetilde{\varphi}}{1+\widetilde{\varphi}^2}\cos(\widetilde{A}t)+
\frac{\widetilde{\varphi}}{1+\widetilde{\varphi}^2}\cos(\widetilde{B}t)\right)k\bigg),\nonumber
\end{eqnarray*}
\end{enumerate}
where $c_1,c_2\in Im\H\setminus\{0\}$,  $d_1\in\mathbb{R}$, $a=\frac{1+d_1}{2}\| c_1\|$, $b=\frac{1}{2}\| c_2\|$, $c=\frac{2}{3}\|c_1\|$, $\tilde a=\frac{1-d_1}{2}\| c_1\|$, $\tilde b=-\frac{1}{2}\| c_2\|$, $\tilde c=c$, $$A=\frac{c+\sqrt{(2a-c)^2+4b^2}}{2},\qquad B=\frac{c-\sqrt{(2a-c)^2+4b^2}}{2},$$ $$\widetilde{A}=\frac{\widetilde{c}+\sqrt{(2\widetilde{a}-\widetilde{c})^2+4\widetilde{b}^2}}{2},\qquad \widetilde{B}=\frac{\widetilde{c}-\sqrt{(2\widetilde{a}-\widetilde{c})^2+4\widetilde{b}^2}}{2},$$ $$\varphi=\frac{c-2a+\sqrt{(c-2a)^2+4b^2}}{2b},\qquad \widetilde{\varphi}=\frac{\widetilde{c}-2\widetilde{a}+\sqrt{(\widetilde{c}-2\widetilde{a})^2+
4\widetilde{b}^2}}{2\widetilde{b}}.$$
\end{theorem}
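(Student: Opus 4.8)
The plan is to reduce the geodesic equation $\tilde\nabla_{\gamma'}\gamma'=0$ to a system of ODEs for the ``angular velocities'' and then integrate it directly in $\H$. Write $\gamma(t)=(x(t),y(t))$ with $x,y$ valued in unit quaternions and $x(0)=y(0)=1$, and set $\alpha=x^{-1}x'$, $\beta=y^{-1}y'$, both curves in $Im\,\H$ since $x,y$ are unit-valued. Expanding $\gamma'$ in the left-invariant frame $\tilde E_i,\tilde F_i$ from \eqref{baza} identifies its components with $\alpha,\beta$ (up to the sign carried by $\tilde E_3,\tilde F_3$). Feeding this into the Levi-Civita connection coefficients listed before \eqref{nablaJ}, the pure $\tilde E\tilde E$ and $\tilde F\tilde F$ contributions vanish because $\varepsilon_{ijk}$ is antisymmetric while the products of velocity components are symmetric, and the mixed terms assemble into cross products.

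First I would carry out this bookkeeping to obtain the equivalence $\tilde\nabla_{\gamma'}\gamma'=0\iff\alpha'=\tfrac23\,\alpha\times\beta,\ \beta'=-\tfrac23\,\alpha\times\beta$. Hence $s:=\alpha+\beta$ is constant and $|\alpha|,|\beta|,|s|$ are constant, and since $\alpha\times\beta=\alpha\times s$ the equation becomes $\alpha'=\tfrac23\,\alpha\times s=\tfrac13[\alpha,s]$, solved by the conjugation $\alpha(t)=e^{-st/3}\alpha_0\,e^{st/3}$ and symmetrically for $\beta$. Geometrically $\alpha$ and $\beta$ precess about the fixed axis $s$ at constant angular speed $\tfrac23|s|$; this is where the parameter $c_1$ enters, namely $c_1=s$, and $c=\tilde c=\tfrac23\|c_1\|$ is the precession frequency.

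Next I would integrate $x'=x\alpha(t)$. A direct check shows that $x(t)=e^{\mu t}e^{st/3}$ with $\mu=\alpha_0-\tfrac{s}{3}=\tfrac13(2\alpha_0-\beta_0)$ solves it with $x(0)=1$, and likewise $y(t)=e^{\tilde\mu t}e^{st/3}$ with $\tilde\mu=\tfrac13(2\beta_0-\alpha_0)$. Thus each coordinate is a product of two one-parameter subgroups, with axes $\hat\mu,\hat s$ and frequencies $|\mu|$ and $|s|/3=\tfrac{c}{2}$. Splitting $\mu$ into its parts parallel and orthogonal to $s$ and setting $a:=\alpha_0\cdot\hat s$, $\tilde a:=\beta_0\cdot\hat s$, $b:=|\mu_\perp|$ gives $\mu_\parallel\cdot\hat s=\tfrac12(2a-c)$, whence $|\mu|=\tfrac12\sqrt{(2a-c)^2+4b^2}$ and the two circle frequencies of $x$ are $A,B=\tfrac{c}{2}\pm|\mu|$, exactly as stated; the tilt angle $\theta$ between $\mu$ and $s$ produces $\varphi=\tan(\theta/2)=\frac{|\mu|-\mu_\parallel\cdot\hat s}{|\mu_\perp|}$, and the amplitudes $\tfrac{1}{1+\varphi^2},\tfrac{\varphi^2}{1+\varphi^2},\tfrac{\varphi}{1+\varphi^2}$ become $\cos^2(\theta/2),\sin^2(\theta/2),\tfrac12\sin\theta$.

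The remaining and most laborious step is purely computational: using the isometries $(p,q)\mapsto(apa^{-1},aqa^{-1})$ that fix $(1,1)$ to rotate $s$ onto the $i$-axis (and the residual rotation about $i$ to fix the $j,k$-placement of $\mu_\perp$), then multiplying out $e^{\mu t}e^{st/3}=\big(\cos(|\mu|t)+\hat\mu\sin(|\mu|t)\big)\big(\cos(\tfrac{c}{2}t)+i\sin(\tfrac{c}{2}t)\big)$, collecting the $1,i,j,k$ components and converting products of trigonometric functions of $|\mu|t$ and $\tfrac{c}{2}t$ into functions of $At,Bt$ by product-to-sum. This yields case (3), the generic situation $c_1,c_2\neq0$. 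The degenerate cases are $s=0$, i.e. $c_1=0$ and $\beta_0=-\alpha_0$, giving the pair of opposite great circles in (1), and $\mu\parallel s$, i.e. $b=0$ and $c_2=0$, giving the coaxial circles $x=e^{iat},y=e^{i\tilde a t}$ of (2). The only genuine obstacle is keeping the trigonometric expansion and the web of substitutions $a,b,c,\tilde a,\tilde b,\tilde c,d_1,\varphi,\tilde\varphi$ mutually consistent; once the precession solution $x=e^{\mu t}e^{st/3}$ is in hand, the rest is forced.
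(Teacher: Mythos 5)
Your proposal is correct and arrives at the stated parametrization, but it integrates the equations by a genuinely different method, so a comparison is in order. Both you and the paper reduce the geodesic equation to the same first-order system $\alpha'=\tfrac23\,\alpha\times\beta$, $\beta'=-\tfrac23\,\alpha\times\beta$ for $\alpha=x^{-1}x'$, $\beta=y^{-1}y'$; you get it from the Christoffel symbols of the left-invariant frame (and the cancellation by antisymmetry of $\varepsilon_{ijk}$ does work out, including the sign flip carried by $\tilde E_3,\tilde F_3$), while the paper gets it from the comparison of $\tilde\nabla$ with the Euclidean connection and an explicit formula for $G(\gamma',P\gamma')$. The real divergence is in solving $x'=x\alpha(t)$. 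The paper normalizes $c_1,c_2,c_3$ to $i,j,-k$ by conjugation, splits $f=g_1+jg_2$ into two complex-valued functions, derives second-order constant-coefficient ODEs whose characteristic roots are $iA,iB$, and then pins down the coefficients from $f(0)=1$ and $|f|\equiv 1$. You instead note that $s=\alpha+\beta$ is constant, that $\alpha(t)=e^{-st/3}\alpha_0e^{st/3}$, and that $x(t)=e^{\mu t}e^{st/3}$ with $\mu=\alpha_0-s/3$ solves the equation --- a one-line verification since $\mu$ commutes with $e^{\mu t}$ and $s$ with $e^{st/3}$. I checked that expanding this product of one-parameter subgroups via product-to-sum identities reproduces the theorem exactly: the frequencies are $A,B=\tfrac c2\pm|\mu|$ with $|\mu|=\tfrac12\sqrt{(2a-c)^2+4b^2}$, your $\varphi=\frac{|\mu|-\mu\cdot\hat s}{|\mu_\perp|}$ equals the paper's $\frac{c-2a+\sqrt{(c-2a)^2+4b^2}}{2b}$, and the amplitudes $\cos^2(\theta/2)$, $\sin^2(\theta/2)$, $\tfrac12\sin\theta$ are precisely $\frac{1}{1+\varphi^2}$, $\frac{\varphi^2}{1+\varphi^2}$, $\frac{\varphi}{1+\varphi^2}$; the degenerations $s=0$ and $\mu\parallel s$ give cases (1) and (2). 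Your route buys a closed form with the unit-norm and initial conditions automatic and a transparent geometric picture (each factor of the geodesic is a product of two circle subgroups, one precessing about the fixed axis $s$), at the price of having to produce the ansatz; the paper's route is mechanical and generates $A$, $B$, $\varphi$ as data of a characteristic polynomial without needing that insight. The only part you leave as bookkeeping --- the consistent handling of $\tilde b=-b$ and $\widetilde{\varphi}$ for the second factor --- is genuinely just bookkeeping, so I see no gap.
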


\begin{remark}
With the substitution $\varphi=\tg\theta$, $\theta\in\mathbb{R}$, (analogously for $\widetilde{\theta}$) $\frac{1}{1+\varphi^2}=\cos^2\theta$, $\frac{\varphi^2}{1+\varphi^2}=\sin^2\theta$, $\frac{\varphi}{1+\varphi^2}=\sin\theta\cos\theta=\frac12\sin2\theta$, the parametrization in the case (3) becomes
\begin{eqnarray}\label{eq16}
\gamma(t)=\bigg((\cos^2\theta\cos(At)+\sin^2\theta\cos(Bt))+\nonumber
(\cos^2\theta\sin(At)+\sin^2\theta\sin(Bt))i+\\ \nonumber
\sin\theta\cos\theta(\sin(At)-\sin(Bt))j-
\sin\theta\cos\theta(-\cos(At)+\cos(Bt))k,\\ \nonumber
(\cos^2\widetilde{\theta}\cos(\widetilde{A}t)+\sin^2\widetilde{\theta}\cos(\widetilde{B}t))+
(\cos^2\widetilde{\theta}\sin(\widetilde{A}t)+\sin^2\widetilde{\theta}\sin(\widetilde{B}t))i+\\ \nonumber
\sin\widetilde{\theta}\cos\widetilde{\theta}(\sin(\widetilde{A}t)-\sin(\widetilde{B}t))j-
\sin\widetilde{\theta}\cos\widetilde{\theta}(-\cos(\widetilde{A}t)+\cos(\widetilde{B}t))k\bigg).\nonumber
\end{eqnarray}
\end{remark}

\begin{proof}
Our proof starts with the observation that  if  $\gamma(t)=(x(t),y(t))$ is a geodesic line on \nks, then
there exist functions $\alpha$, $\beta: \R \rightarrow Im\H$ such that
\begin{equation}\label{eq1a}
\gamma'(t)=(x'(t),y'(t))=(x(t)\alpha(t),y(t)\beta(t)).
\end{equation}
Namely, it is well-known that for  $v\in T_p\mathbb{S}^3$ there exists  $\tilde v\in Im\H$ such that $v=p\tilde v$.

For more convenience, we will sometimes omit the argument $t$ for the functions $x, y, \alpha, \beta,\gamma$ and their derivatives. From (\ref{eq1a}) it follows
\begin{equation}
\gamma''=(x\alpha\cdot \alpha +x\alpha',y\beta\cdot \beta +y\beta').
\end{equation}
Recall that for imaginary quaternions $\alpha_1,\alpha_2$ one has $\alpha_1\cdot \alpha_2=-<\alpha_1,\alpha_2>+\alpha_1\times\alpha_2$. Thus, $\alpha\cdot \alpha=-\|\alpha\|^2$ and we may write
\begin{align*}
\gamma''&=(x(\alpha'-\|\alpha\|^2),y(\beta'-\|\beta\|^2))\\
	&=(x\alpha',y\beta')-\| \alpha\|^2(x,0)-\| \beta\|^2(0,y).
\end{align*}
Identifying the tangent and normal parts in the previous relation, using (\ref{Euclconn}), it follows
\begin{equation}\label{x}
\nabla^E_{\gamma'}\gamma'=(x\alpha',y\beta')
\end{equation}
and Lemma \ref{nablaE} yields
\begin{equation}\label{a1}
\nabla^E_{\gamma'}\gamma'=\tilde \nabla_{\gamma'}\gamma'+JG(\gamma', P\gamma').
\end{equation}
An easy computation establishes the additional formula which allows us to evaluate $G$ for any tangent vector fields:
\begin{align}\label{formulaG}
G(X,Y)=\frac{2}{3\sqrt{3}}(p(\beta\times\gamma+\alpha\times\delta&+\alpha\times\gamma-2\beta\times\delta),
	q(-\alpha\times\delta-\beta\times\gamma+2\alpha\times\gamma-\beta\times\delta)),
\end{align}
for  $X=(p\alpha,q\beta),Y=(p\gamma,q\delta)\in T_{(p,q)}\nks$, $\alpha, \beta, \gamma, \delta \in
Im\H$.
As $\gamma'=(x\alpha,y\beta)$, $P\gamma'=(x\beta,y\alpha)$,  relation (\ref{formulaG}) implies
$G(\gamma', P\gamma')=\frac{2}{\sqrt 3}(x(\alpha\times\beta), y(\alpha\times\beta))$, which then gives
\begin{equation}\label{JG}
JG(\gamma', P\gamma')=\frac{2}{3}(x(\alpha\times\beta),-y(\alpha\times\beta)).
\end{equation}
Finally, as $\gamma$ is a geodesic, using \eqref{a1} and \eqref{JG}, we compute
\begin{equation}\label{eq2}
\alpha'(t)=\frac{2}{3}\alpha(t)\times \beta(t), \quad \beta'(t)=-\frac{2}{3}\alpha(t)\times \beta(t).
\end{equation}
From (\ref{eq2}) it follows $(\alpha(t)+\beta(t))' =0$ and therefore there exists $c_1\in Im\H$ such that
\begin{equation}\label{eq2a}
\alpha(t)+\beta(t) =c_1.
\end{equation}
Using (\ref{eq2}) and (\ref{eq2a}) it follows
\begin{equation}\label{eq2b}
(\alpha(t)-\beta(t))' =-\frac{2}{3} c_1\times (\alpha(t)-\beta(t)).
\end{equation}

If $c_1=0$, using (\ref{eq2a}) and (\ref{eq2b}), then $\alpha$ and $\beta$ are constants. Without loss of generality, we can assume that they are imaginary quaternions collinear with $i$:
\begin{equation}\label{c1}
\alpha=-\beta=ai,\qquad a\in\mathbb{R}\setminus\{0\}.
\end{equation}
\vskip.5cm
If $c_1\ne 0$, since $<\alpha(t)-\beta(t),c_1>=<\alpha(t)-\beta(t),\alpha(t)+\beta(t)>$ and $c_1$ is a constant vector, we have
 $<\alpha(t)-\beta(t),c_1>'= <-\frac{2}{3} c_1\times (\alpha(t)-\beta(t)),c_1>=0$, namely, $<\alpha(t)-\beta(t),c_1>=\tilde d_1\in \mathbb{R}$.
 Let us denote by $\varepsilon$ the part of $\alpha-\beta$ which is orthogonal to $c_1$:
\begin{equation}\label{epsilon}
\varepsilon(t)=\alpha(t)-\beta(t)-\frac{<\alpha(t)-\beta(t),c_1>}{\|c_1\|^2}c_1
\end{equation}
and compute
\begin{eqnarray}\label{de0}
\varepsilon'(t)&=-\frac{2}{3} c_1\times\varepsilon(t),\\
\label{de}
\varepsilon''(t)&=-\frac{4}{9}\|c_1\|^2 c_1\varepsilon(t).
\end{eqnarray}
Solving (\ref{de}) gives
\begin{equation}\label{de1}
\varepsilon(t)=\cos\left(\frac{2}{3}\|c_1\| t\right)c_2+\sin\left(\frac{2}{3}\|c_1\| t\right)c_3,
\end{equation}
for  $c_2,c_3\in Im\H$, $c_2,c_3\perp c_1$.
Having in mind (\ref{de0}), we conclude that
\begin{equation*}
\|c_1\| c_2=c_1\times c_3,\quad
\|c_1\| c_3=-c_1\times c_2.
\end{equation*}
Therefore
\begin{equation}\label{c3}
c_3=-\frac{c_1}{\|c_1\|}\times c_2
\end{equation}
and, consequently, $c_3\perp c_1,c_2$.

For $c_2=0$ it follows that $c_3=0$, $\varepsilon=0$, $\alpha-\beta= d_1 c_1$ ($d_1=\frac{\tilde d_1}{\|c_1\|^2}$), $\alpha+\beta=c_1$ and therefore
\begin{equation*}
\alpha=\frac{1+ d_1}{2}c_1,\quad
\beta=\frac{1- d_1}{2}c_1.
\end{equation*}
We can assume that $c_1$ is collinear with $i$ and get
\begin{equation}\label{c2}
\alpha=ai,\quad
\beta=\tilde ai,
\end{equation}
where $a=\frac{1+d_1}{2}\|c_1\|$, $\tilde a=\frac{1-d_1}{2}\|c_1\|$.

For $c_2\ne 0$, the vectors $\frac{c_1}{\|c_1\|}$, $\frac{c_2}{\|c_2\|}$, $-\frac{c_3}{\|c_3\|}$ form an orthonormal basis of $Im\mathbb{H}$. Since it is always possible to find a unit quaternion $h\in \mathbb{S}^3$ such that
\begin{equation*}
h^{-1}\frac{c_1}{\|c_1\|}h=i,\quad
h^{-1}\frac{c_2}{\|c_2\|}h=j,\quad
h^{-1}\frac{c_3}{\|c_3\|}h=-k,
\end{equation*}
we may assume to be working with the basis $\{i,j,-k\}$.
Using (\ref{epsilon}) and (\ref{de1}) we obtain
\begin{eqnarray}
&\alpha(t)+\beta(t)=c_1=\|c_1\|\,h\,i\,h^{-1},\\
&\alpha(t)-\beta(t)=d_1\,\|c_1\|\,h\,i\,h^{-1}+\|c_2\|\cos\left(\frac{2}{3}\|c_1\| t\right)h\,j\,h^{-1}-\|c_2\|\sin\left(\frac{2}{3}\|c_1\| t\right)h\,k\,h^{-1}.
\end{eqnarray}
Since $\|c_3\|=\|c_2\|$ from (\ref{c3}) and  $d_1=\frac{\tilde d_1}{\|c_1\|^2}\in \mathbb{R}$, it gives
\begin{eqnarray}\label{eq3}
\alpha(t)&=&\frac{1+d_1}{2}\| c_1\|h i h^{-1}+\frac{1}{2}\| c_2\|\cos\left(\frac{2}{3}\|c_1\| t\right) h j h^{-1}\nonumber\\
&-&\frac{1}{2}\| c_2\|\sin\left(\frac{2}{3}\|c_1\| t\right) h k h^{-1},\\
\beta(t)&=&\frac{1-d_1}{2}\| c_1\|h i h^{-1}-\frac{1}{2}\| c_2\|\cos\left(\frac{2}{3}\|c_1\| t\right) h j h^{-1}\nonumber\\
&+&\frac{1}{2}\| c_2\|\sin\left(\frac{2}{3}\|c_1\| t\right) h k h^{-1}, \nonumber
\end{eqnarray}
where $h\in \mathbb{S}^3$, $c_1,c_2\in Im\H\setminus\{0\}$,  $d_1\in\mathbb{R}$.

\begin{remark}
Notice that we had the freedom to make a rotation of the basis, in the following sense.
The map $(x,y)\to(hxh^{-1},hyh^{-1})$ is an isometry (rotation) of \nks and
 if $(1,1)\in \gamma=(x,y)$ then also $(1,1)\in \tilde\gamma=(\tilde x,\tilde y)=(hxh^{-1}, hyh^{-1})$. Moreover, $\tilde\gamma$ stays tangent to \nks since, for $\alpha$, $\beta: \R \rightarrow Im\H$ and using (\ref{eq1a}), we have
\begin{eqnarray}
\tilde\gamma'&=&(hx'h^{-1},hy'h^{-1})=(hx\alpha h^{-1},hy\beta h^{-1})\\\nonumber
&=&(hxh^{-1}h\alpha h^{-1},hyh^{-1}h\beta h^{-1})=(\tilde x\tilde\alpha,\tilde y\tilde\beta)\nonumber
\end{eqnarray}
for $\tilde\alpha=h\alpha h^{-1}$, $\tilde\beta=h\beta h^{-1}$, $\tilde\alpha$, $\tilde\beta: \R \rightarrow Im\H$.
Therefore, it is enough to solve
\begin{equation}\label{eq4}
(x'(t),y'(t)) =(x(t)\alpha(t), y(t)\beta(t))
\end{equation}
for
\begin{gather}\label{eq5}
\begin{array}{lll}
\alpha(t)&=&\frac{1+d_1}{2}\| c_1\| i +\frac{1}{2}\| c_2\|\cos\left(\frac{2}{3}\|c_1\| t\right) j -\frac{1}{2}\| c_2\|\sin\left(\frac{2}{3}\|c_1\| t\right)  k ,\\
\beta(t)&=&\frac{1-d_1}{2}\| c_1\| i -\frac{1}{2}\| c_2\|\cos\left(\frac{2}{3}\|c_1\| t\right) j +\frac{1}{2}\| c_2\|\sin\left(\frac{2}{3}\|c_1\| t\right) k.
\end{array}
\end{gather}
\end{remark}

In all the cases considered, we have to solve the system of differential equations
\begin{gather}\label{eq6}
\begin{array}{lll}
\alpha(t)&=&ai +b\cos (ct) j -b\sin(ct)k ,\\
\beta(t)&=&\tilde a i +\tilde b\cos(\tilde ct)j -\tilde b \sin (\tilde ct)k,
\end{array}
\end{gather}
which reduces to solving an equation of the form
\begin{equation}\label{eq7}
f'(t)=f(t)(ai+bj\cos (ct)-bk\sin(ct)),
\end{equation}
with the following constants:
\begin{enumerate}
\item if $c_1=0$, then $\tilde{a}=-a\neq0$, $b=\tilde{b}=c=\tilde{c}=0$;
\item if $c_1\neq 0$, $c_2=0$, then $a=\frac{1+d_1}{2}\|c_1\|$, $\tilde{a}=\frac{1-d_1}{2}\|c_1\|$, $b=\tilde{b}=c=\tilde{c}=0$;
\item if $c_1\neq 0$, $c_2\neq0$, then $a=\frac{1+d_1}{2}\| c_1\|$, $b=\frac{1}{2}\| c_2\|$, $c=\frac{2}{3}\|c_1\|$, $\tilde a=\frac{1-d_1}{2}\| c_1\|$, $\tilde b=-\frac{1}{2}\| c_2\|$, $\tilde c=c$.
\end{enumerate}

Cases (1) and (2) lead to the same differential equation of the form $f'(t)=f(t)\cdot ai$, which has an obvious solution which satisfies $f(0)=1$: $$f(t)=\cos(at)+i\sin(at).$$
The geodesics in these two cases have the parametrization
$$\gamma(t)=(\cos(at)+\sin(at)i,\cos(\tilde{a}t)+\sin(\tilde{a}t)i),$$ where in case (1) we put $\tilde{a}=-a\neq 0$ (so $a+\tilde a=0$), and in case (2) $a+\tilde{a}=\|c_1\|\neq0$.
\noindent
In the third case, we write explicitly
\begin{equation}\label{star}
f(t)=f_1(t)+if_2(t)+jf_3(t)+kf_4(t)=:g_1(t)+jg_2(t),\nonumber
\end{equation}
for $f_l$ real functions, where $l=\overline{1,4}$. Then the equation (\ref{eq7}) becomes
\begin{align*}
g_1'+jg_2'=(g_1+jg_2)(ai+bje^{ict})
\end{align*}
and we get the following system of differential equations
\begin{equation} \label{eq8}
\begin{array}{ll}
g_1'(t)=g_1ai-\overline{g_2}be^{ict} ,\\
g_2'(t)=g_2ai+\overline{g_1}be^{ict} .
 \end{array} \end{equation}
When we differentiate one of these equations and combine it with the other, we get the second order linear equations
\begin{equation} \label{eq9}
\begin{array}{ll}
g_1''-cig_1'+(a^2+b^2-ac)g_1=0 ,\\
g_2''-cig_2'+(a^2+b^2-ac)g_2=0.
 \end{array} \end{equation}
The characteristic equation is
\begin{equation}\label{eq10}
\lambda^2-ci\lambda+(a^2+b^2-ac)=0,
\end{equation}
which has the solutions $\lambda_1=Ai$, $\lambda_2=Bi$, where
\begin{equation}\label{eq1}
A=\frac{c+\sqrt{(2a-c)^2+4b^2}}{2},\qquad B=\frac{c-\sqrt{(2a-c)^2+4b^2}}{2}.
\end{equation}
We then find the general solutions for \eqref{eq9} as
\begin{equation} \label{eq111}
\begin{array}{ll}
g_1(t)=(\mu_1+i\mu_2)e^{iAt}+(\nu_1+i\nu_2)e^{iBt},\\
g_2(t)=(\eta_1+i\eta_2)e^{iAt}+(\xi_1+i\xi_2)e^{iBt},
 \end{array} \end{equation}
where $\mu_l,\nu_l,\eta_l,\xi_l\in \mathbb{R}, \, l=1,2.$
Since $A\neq B$, the functions $\sin(At)$, $\cos(At)$, $\sin(Bt)$, $\cos(Bt)$ are linearly independent. After substitution of (\ref{eq111}) in (\ref{eq8}) we get that the following hold
\begin{eqnarray}\label{eq11}
(A-a)\mu_1&=&b\xi_2,\qquad (B-a)\nu_1=b\eta_2, \nonumber\\
(A-a)\mu_2&=&b\xi_1,\qquad (B-a)\nu_2=b\eta_1, \nonumber\\
(A-a)\eta_1&=&-b\nu_2,\qquad (B-a)\xi_1=-b\mu_2, \nonumber\\
(A-a)\eta_2&=&-b\nu_1,\qquad (B-a)\xi_2=-b\mu_1. \nonumber
\end{eqnarray}
Since
$$
\frac{A-a}{b}=\frac{-b}{B-a}=\frac{c-2a+\sqrt{(c-2a)^2+4b^2}}{2b}:=\varphi,
$$
we get four relations among coefficients
\begin{equation}\label{eq12}
\varphi=\frac{\xi_2}{\mu_1}=\frac{\xi_1}{\mu_2}=-\frac{\nu_1}{\eta_2}=-\frac{\nu_2}{\eta_1}.
\end{equation}
Case $b=0$ leads to $c_2=0$, which has already been taken into consideration.

Hence, the solution for the function $f$ is
\begin{eqnarray}\label{eq13}
f(t)=(\mu_1\cos(At)-\mu_2\sin(At)-\varphi\eta_2\cos(Bt)+\varphi\eta_1\sin(Bt))+\\ \nonumber
(\mu_2\cos(At)+\mu_1\sin(At)-\varphi\eta_1\cos(Bt)-\varphi\eta_2\sin(Bt))i+\\ \nonumber
(\eta_1\cos(At)-\eta_2\sin(At)+\varphi\mu_2\cos(Bt)-\varphi\mu_1\sin(Bt))j-\\ \nonumber
(\eta_2\cos(At)+\eta_1\sin(At)+\varphi\mu_1\cos(Bt)+\varphi\mu_2\sin(Bt))k.
\end{eqnarray}
Since $f(t)$ is a curve on the sphere $\mathbb{S}^3$ through the point $1$, we get the following relations
\begin{equation}\label{eq14}
\mu_2=\eta_1=0,\quad \mu_1=\frac{1}{1+\varphi^2},\quad \eta_2=\frac{-\varphi}{1+\varphi^2}. \nonumber
\end{equation}
Finally, the solution of the equation is
$$f(t)=\bigg(\frac{1}{1+\varphi^2}\cos(At)+\frac{\varphi^2}{1+\varphi^2}\cos(Bt)\bigg)+
\left(\frac{1}{1+\varphi^2}\sin(At)+\frac{\varphi^2}{1+\varphi^2}\sin(Bt)\right)i+$$
$$\left(\frac{\varphi}{1+\varphi^2}\sin(At)-\frac{\varphi}{1+\varphi^2}\sin(Bt)\right)j-
\left(-\frac{\varphi}{1+\varphi^2}\cos(At)+\frac{\varphi}{1+\varphi^2}\cos(Bt)\right)k.$$
Let us notice that this is indeed a curve on $\mathbb{S}^3$ since $\mu_1^2+\mu_2^2+\eta_1^2+\eta_2^2=\frac{1}{1+\varphi^2}=\cos^2\theta$. We get the parametrization (\ref{eq15}) of the geodesics when we substitute (\ref{eq13}), with the proper constants $A,B,\tilde A,\tilde B$ into $\gamma(t)=(x(t),y(t))$.
\end{proof}

\begin{proposition}
\begin{enumerate}
\item \label{Cor1}
The geodesic lines on \nks\, with respect to nearly K\"ahler metric coincide with geodesic lines with respect to usual Euclidean product metric if and only if $c_1=0$ or $c_2=0$.

\item \label{Cor2}
   \begin{itemize}
      \item The tangent vector of the geodesic line is the eigenvector of the product structure $P$ with eigenvalue $-1$ if and only if $c_1=0$.
      \item The tangent vector of the geodesic line is the eigenvector of the product structure $P$ with eigenvalue $1$ if and only if $c_1\neq 0$, $c_2=0$, $d_1=0$.
    \end{itemize}
\item \label{Cor3}
The geodesic line on \nks is closed if and only if it has the parametrization from the case Theorem \normalfont{\ref{Geodlinesth}} {\normalfont{(1)}}, or the fractions $\frac{a}{\tilde a}$, $\frac{B}{A}$ and $\frac{\tilde B}{\tilde A}$ are rational numbers in the cases {\normalfont{(2)}} and {\normalfont{(3)}} of Theorem \normalfont{\ref{Geodlinesth}}, respectively.
\end{enumerate}
\end{proposition}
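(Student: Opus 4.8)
The plan is to read all three statements directly off the velocity data $\alpha,\beta\colon\R\to Im\H$ produced in the proof of Theorem \ref{Geodlinesth}, using the two structural identities $\gamma'=(x\alpha,y\beta)$ from \eqref{eq1a} and $P\gamma'=(x\beta,y\alpha)$ from \eqref{defP}, together with the explicit forms of $\alpha,\beta$ in the three cases.

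For \ref{Cor1} I would argue that, since $\gamma$ is a nearly K\"ahler geodesic, \eqref{a1} reduces to $\nabla^E_{\gamma'}\gamma'=JG(\gamma',P\gamma')$; hence $\gamma$ is simultaneously a geodesic for the Euclidean product metric if and only if $JG(\gamma',P\gamma')=0$ for all $t$. By \eqref{JG} this is exactly $\alpha(t)\times\beta(t)=0$ for all $t$. Substituting the explicit $\alpha,\beta$: when $c_1=0$ or $c_2=0$ both are real multiples of $i$, so the cross product vanishes identically; when $c_1,c_2\neq0$ a one-line computation from \eqref{eq5} gives $\alpha\times\beta$ of constant norm $\tfrac12\|c_1\|\,\|c_2\|\neq0$. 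This yields the stated equivalence.

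For \ref{Cor2} I would note that $\gamma'$ is a $P$-eigenvector with eigenvalue $\pm1$ if and only if $(x\beta,y\alpha)=\pm(x\alpha,y\beta)$, i.e. $\beta=\pm\alpha$ (using that $x,y$ are units). The eigenvalue $-1$ case is $\alpha+\beta\equiv0$, which by \eqref{eq2a} is precisely $c_1=0$. The eigenvalue $+1$ case is $\alpha\equiv\beta$; from \eqref{eq3} one has $\alpha-\beta=d_1c_1+\varepsilon(t)$ with $\varepsilon\perp c_1$ and $\|\varepsilon\|=\|c_2\|$ (see \eqref{epsilon}), so $\alpha-\beta$ vanishes identically exactly when $c_2=0$ and $d_1=0$, with $c_1\neq0$ to keep $\gamma'\neq0$.

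Finally, \ref{Cor3} is a periodicity question for $t\mapsto(x(t),y(t))$. In case Theorem \ref{Geodlinesth}(1) one has $x=e^{iat}$, $y=e^{-iat}$, both $\tfrac{2\pi}{|a|}$-periodic, so the geodesic is always closed. In cases (2) and (3) the components $x,y$ are trigonometric sums with frequency sets $\{a\},\{\tilde a\}$ and $\{A,B\},\{\tilde A,\tilde B\}$ respectively; since $A\neq B$ (as $b\neq0$) and the coefficients are nonzero (as $\varphi\neq0$ when $b\neq0$), linear independence of the exponentials shows that a number $T$ is a period if and only if $\omega T\in2\pi\mathbb Z$ for every frequency $\omega$ occurring. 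The forward implication is then immediate: a common period forces every ratio $\omega_1/\omega_2=(\omega_1T)/(\omega_2T)$ to be rational, giving $a/\tilde a\in\mathbb Q$ in case (2) and $B/A,\ \tilde B/\tilde A\in\mathbb Q$ in case (3). The step I expect to be the main obstacle is the converse in case (3): rationality of $B/A$ and of $\tilde B/\tilde A$ controls the pairs $\{A,B\}$ and $\{\tilde A,\tilde B\}$ only separately, and one must still exhibit a single period common to all four frequencies. Here the relation between the roots and coefficients of \eqref{eq10}, namely $A+B=c=\tilde c=\tilde A+\tilde B$, is decisive: writing $B/A=q/p$ gives $A=\tfrac{p}{p+q}c$ and likewise $\tilde A=\tfrac{\tilde p}{\tilde p+\tilde q}c$, so $A/\tilde A\in\mathbb Q$ and all four frequencies are rational multiples of $c$; taking $T=\tfrac{2\pi}{c}$ times a common denominator of these rationals then closes $\gamma$, which completes the equivalence.
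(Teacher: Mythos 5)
Your proposal is correct and follows essentially the same route as the paper: part (1) reduces to $\alpha\times\beta\equiv 0$ (the paper phrases this as $\alpha'=\beta'=0$ via \eqref{x} and \eqref{eq2}, which is the same condition), part (2) reduces to $\beta=\pm\alpha$ and is read off from \eqref{c1}, \eqref{c2} and \eqref{epsilon}, and part (3) is the same periodicity analysis of trigonometric sums. The one place where you go beyond the paper is the converse direction of part (3): the paper only argues that each coordinate function of $\gamma$ is separately periodic when $B/A$ (resp.\ $\tilde B/\tilde A$) is rational, and leaves implicit why the two coordinates then share a common period. Your observation that $A+B=c=\tilde c=\tilde A+\tilde B$ (sum of roots of \eqref{eq10}), so that rationality of $B/A$ and $\tilde B/\tilde A$ makes all four frequencies rational multiples of the single nonzero constant $c$, is exactly the missing glue, and it is a genuine improvement in rigor over the published argument. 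The only minor point you leave untreated is the degenerate case $A=0$ or $B=0$ (they cannot both vanish since $A-B>0$ for $b\neq 0$), which the paper handles by the parenthetical convention in its proof; your argument adapts immediately since the corresponding term is then constant.
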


\begin{proof}
\begin{enumerate}
\item
From relation (\ref{x}) it is evident that $\nabla^E_{\gamma'}\gamma'=0$ is equivalent to $\alpha'=\beta'=0$, namely $\alpha=\beta=\mathrm{const}$. We have already seen that this is true if and only if $c_1=0$ or $c_2=0$.

\item
As $\gamma'=(x\alpha,y\beta)$, $P\gamma'=(x\beta,y\alpha)$, condition $P\gamma'=\pm\gamma'$ reduces to $\alpha=\pm\beta$ and the conclusion follows from (\ref{c1}) and (\ref{c2}).

\item
It is apparent that $\gamma(t)$ is a periodic function in the case Theorem \ref{Geodlinesth} (1). Also, in the case Theorem \ref{Geodlinesth} (2) both coordinate functions of $\gamma(t)$ are periodic with the same period if and only if the fraction $\frac{a}{\tilde a}$ is a rational number (if one of the constants $a,\tilde a$ is $0$, then this condition stands for the second one which is nonzero). In the case Theorem \ref{Geodlinesth} (3) the first coordinate function of $\gamma(t)$ is periodic if and only if the functions $\sin(At)$, $\cos(At)$, $\sin(Bt)$, $\cos(Bt)$ are periodic with the same period, which is again true if and only if the fraction $\frac{B}{A}$ is a rational number (if one of the constants $A$, $B$ is $0$, then this condition stands for the second one which is nonzero). A similar conclusion is valid for the second coordinate function.
\end{enumerate}
\end{proof}

\begin{remark} Using the formula (\ref{g}) we can calculate the length of the tangent vector $$\|\gamma'\|=\sqrt{g(\gamma',\gamma')}=\sqrt{(\frac13+d_1^2)\|c_1\|^2+\|c_2\|^2}.$$ Since this is constant, it is easy to get the arclength parametrization where $\gamma'$ is a unit length vector field along the geodesics, only with the change of parameter $s=t\sqrt{(\frac13+d_1^2)\|c_1\|^2+\|c_2\|^2}$.
\end{remark}

\end{document}